\newtheorem*{conjecture*}{Conjecture}
\newtheorem{theorem}{Theorem}[section]
\DeclareSymbolFont{cyrletters}{OT2}{wncyr}{m}{n}\DeclareMathSymbol{\Sha}{\mathalpha}{cyrletters}{"58}
\DeclareMathSymbol{\FSha}{\mathalpha}{cyrletters}{"11}
\renewcommand{\phi}{{\varphi}}
\renewcommand{\geq}{\geqslant}
\renewcommand{\leq}{\leqslant}
\newcommand{\cyc}{\mathrm{cyc}}
\newcommand{\links}{\left(\begin{array}{cc}}
\newcommand{\rechts}{\end{array}\right)}
\newcommand{\bai}{\left[\begin{array}{cc}}
\newcommand{\dai}{\end{array}\right]}
\newcommand{\hidari}{\left(\begin{array}{c}}
\newcommand{\migi}{\end{array}\right)}
\newcommand{\Ff}{\mathbb{F}}
\newcommand{\N}{\mathbb{N}}
\newcommand{\Q}{\mathbb{Q}}
\newcommand{\Z}{\mathbb{Z}}
\newcommand{\calE}{\mathcal{E}}
\newcommand{\calO}{\mathcal{O}}
\newcommand{\calU}{\mathcal{U}}
\newcommand{\calV}{\mathcal{V}}
\newcommand{\gal}{\mathrm{Gal}}
\newcommand{\Gal}{\operatorname{Gal}}
\newcommand{\fla}{\operatorname{fl}}
\newcommand{\pr}{\operatorname{pr}}
\newcommand{\coker}{\operatorname{coker}}
\newcommand{\injects}{\hookrightarrow}
\newcommand{\lra}{\longrightarrow}
\newtheorem{auxiliary proposition}[theorem]{Auxiliary Proposition}
\newtheorem{corollary}[theorem]{Corollary}
\newtheorem{definition}[theorem]{Definition}
\newtheorem{lemma}[theorem]{Lemma}
\newtheorem{main conjecture}[theorem]{Main Conjecture}
\newtheorem{main theorem}[theorem]{Main Theorem}
\newtheorem{modesty proposition}[theorem]{Modesty Proposition}
\newtheorem{open problem}[theorem]{Open Problem}
\newtheorem{proposition}[theorem]{Proposition}
\newtheorem{remark}[theorem]{Remark}
\newtheorem{convergence lemma}[theorem]{Convergence Lemma}
\newtheorem{corrected lemma}[theorem]{Corrected Lemma}
\newtheorem{growth lemma}[theorem]{Growth Lemma}
\newtheorem{coefficient lemma}[theorem]{Integrality Lemma}
\newtheorem{interpolation lemma}[theorem]{Interpolation Lemma}
\newtheorem{kernel lemma}[theorem]{Kernel Lemma}
\newtheorem{limit lemma}[theorem]{Limit Lemma}
\newtheorem{tandem lemma}[theorem]{Modesty Lemma}
\newtheorem{zero-finding lemma}[theorem]{Zero-Finding Lemma}
\title[~]{ A Study of Fine Selmer Groups Over Function Fields via Greenberg Neighbourhoods}
\author[S. Ghosh]{Sohan Ghosh}
\address[Ghosh]{Indian Institute of Science Education and Research, Mohali,Punjab (140306) INDIA}
\email{ghoshsohan4@gmail.com}
\keywords{Iwasawa theory; fine Selmer groups; Elliptic Curves; function field}
\subjclass[2020]{Primary: 11R23,  Secondary: 20C07, 11G05, 11R58  }
\begin{document}

\maketitle
\begin{abstract}
In \cite{Greenberg73}, Greenberg examined the local behavior of Iwasawa invariants as functions on the set $\mathcal{E}(F)$, the set of all $\mathbb{Z}_p$-extensions of a number field $F$. Kleine \cite{KleineCanad} later extended these ideas to explore the variation of Iwasawa invariants in the context of Selmer groups of elliptic curves across different $\mathbb{Z}_p$-extensions of $F$.

Let $K$ be a global function field of characteristic $p$.
In this article, we investigate the relation between Iwasawa invariants of fine Selmer groups of an elliptic curve over $K$ across various $\mathbb{Z}_p$-extensions of $K$, utilizing Kleine's techniques. Furthermore, we connect this analysis to an analogue of  \nameref{conja} by Coates and Sujatha \cite{coatessujatha05} for different $\mathbb{Z}_p$-extensions of the function field $K$.
\end{abstract}
\section{Introduction}
\noindent Let us fix a prime $p$ throughout. Iwasawa theory originated from the work of K. Iwasawa, who studied the growth of the $p$-primary parts of class groups in towers of $\mathbb{Z}_p$-extensions. He established the following celebrated theorem:

\begin{theorem}[Iwasawa]\cite{IwasawaMain} 
Let $F$ be a number field and $F_\infty$ be a $\mathbb{Z}_p$-extension of $F$. For each $n\geq 0$, let $p^{e_n}$ denote the exact power of $p$ that divides $\#Cl(F_n)$. Then there exist invariants $\lambda, \mu \in \mathbb{Z}_{\geq 0}$ and $\nu \in \mathbb{Z}$, depending on $p$ but independent of $n$, such that $e_n = \lambda n + \mu p^n + \nu$ for $n \gg 0$. \end{theorem}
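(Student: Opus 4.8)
The plan is to recast the problem in terms of the Iwasawa algebra and then apply the structure theory of modules over it. Write $\Gamma = \Gal(F_\infty/F) \cong \mathbb{Z}_p$, fix a topological generator $\gamma_0$ of $\Gamma$, and identify the completed group algebra $\mathbb{Z}_p[[\Gamma]]$ with the power series ring $\Lambda = \mathbb{Z}_p[[T]]$ via $\gamma_0 \mapsto 1+T$. Set $\omega_n = (1+T)^{p^n} - 1 \in \Lambda$; this is a distinguished polynomial of degree $p^n$, and $\Lambda/\omega_n\Lambda \cong \mathbb{Z}_p[\Gal(F_n/F)]$. For each $n$ let $A_n$ be the $p$-Sylow subgroup of $Cl(F_n)$, so that $\#A_n = p^{e_n}$, and form the projective limit $X = \varprojlim_n A_n$ along the norm maps. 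By class field theory $A_n \cong \Gal(L_n/F_n)$, where $L_n$ is the maximal unramified abelian $p$-extension of $F_n$, and $X \cong \Gal(L_\infty/F_\infty)$ with $L_\infty = \bigcup_n L_n$; in particular $X$ is a $\Lambda$-module.

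Next I would establish that $X$ is a finitely generated torsion $\Lambda$-module, with $X/\omega_n X$ finite for all $n$. The finiteness of each $X/\omega_n X$ is a theorem of Iwasawa, deduced from the (obvious) finiteness of the class numbers $\#Cl(F_n)$ together with class field theory controlling the ramified primes; granting it, finite generation follows from the topological Nakayama lemma applied to $X/(p,T)X$, torsion-ness follows since a free part would force $X/\omega_n X$ to be infinite, and the same finiteness shows the distinguished part of the module has characteristic polynomial coprime to $\omega_n$ for every $n$. Then invoke the structure theorem for finitely generated torsion $\Lambda$-modules: there is a pseudo-isomorphism (a $\Lambda$-morphism with finite kernel and cokernel)
\[
X \longrightarrow E := \bigoplus_{i=1}^{r} \Lambda/(p^{a_i}) \;\oplus\; \bigoplus_{j=1}^{s} \Lambda/(g_j(T)^{b_j}),
\]
with each $g_j$ a distinguished polynomial coprime to $\omega_n$ for all $n$. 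Define $\mu = \sum_i a_i$ and $\lambda = \sum_j b_j \deg g_j$.

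The remaining ingredients are a control theorem and a length computation. For $n$ in the cofinal range in which every prime of $F$ ramified in $F_\infty$ is totally ramified in $F_\infty/F_n$, one relates $A_n$ to $X/\omega_n X$, the discrepancy being governed by the finitely many ramified primes and bounded independently of $n$, so that $\#A_n = \#(X/\omega_n X)\cdot p^{O(1)}$. To compute $\#(X/\omega_n X)$, use that a pseudo-isomorphism has finite kernel and cokernel and that for any \emph{finite} $\Lambda$-module $Z$ both $\#(Z/\omega_n Z)$ and $\#Z[\omega_n]$ become constant as $n \to \infty$ (since $\gamma_0^{p^n}$ eventually acts trivially on $Z$); the snake lemma then gives $\#(X/\omega_n X) = \#(E/\omega_n E)\cdot p^{O(1)}$ for $n \gg 0$. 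Finally compute $\#(E/\omega_n E)$ termwise: a summand $\Lambda/(p^a)$ contributes $\#\big((\mathbb{Z}/p^a)[T]/(\omega_n)\big) = p^{a p^n}$ because $\omega_n$ is distinguished of degree $p^n$; a summand $\Lambda/(g^b)$ with $\deg g = d$ contributes $p^{v_p(\Res(\omega_n, g^b))}$, and factoring $\omega_n = \prod_{\zeta^{p^n}=1}(T - (\zeta-1))$ and summing valuations — using $v_p(\zeta - 1) = 1/\varphi(p^k)$ for primitive $p^k$-th roots $\zeta$, and that $g^b$ agrees with $T^{bd}$ to sufficiently high order near $T=0$ — yields $bd\,n + O(1)$. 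Adding up, $\#(E/\omega_n E) = p^{\lambda n + \mu p^n + c}$ for a constant $c$ and all $n \gg 0$, whence $e_n = \lambda n + \mu p^n + \nu$ for $n \gg 0$.

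I expect the main obstacle to be making the control theorem precise: tracking exactly how the ramified primes of $F_\infty/F$ enter the comparison between $A_n$ and $X/\omega_n X$, and verifying that their effect is genuinely bounded in $n$ so that it is absorbed into the constant $\nu$ rather than contaminating the $\lambda$ or $\mu$ terms. The termwise valuation estimate for the $\Lambda/(g^b)$ summands is the other delicate point, but it reduces to a finite manipulation of $p$-adic valuations of cyclotomic integers rather than anything structural.
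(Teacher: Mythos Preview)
The paper does not supply a proof of this theorem at all: it is stated with attribution \cite{IwasawaMain} as background, and no argument is given. So there is no ``paper's own proof'' against which to compare your proposal.

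That said, your outline is the standard classical proof (essentially Iwasawa's, as presented for instance in Washington's or Lang's textbook): pass to $X=\varprojlim A_n$ as a $\Lambda$-module, invoke the structure theorem for finitely generated torsion $\Lambda$-modules, compute $\#(E/\omega_nE)$ summand by summand, and absorb the pseudo-isomorphism and ramification discrepancies into the constant $\nu$. The ingredients and the order in which you assemble them are correct. One small caution on the control step: the exact relation is not $A_n \sim X/\omega_n X$ but rather, once one passes to a base level $e$ at which every ramified prime is totally ramified, $A_n \cong X/\nu_{n,e}Y_e$ for a certain submodule $Y_e$ of finite index in $X$, with $\nu_{n,e}=\omega_n/\omega_e$; the bounded-index passage from $Y_e$ to $X$ is what produces the $O(1)$ you wrote. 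Your termwise order computations are fine; the resultant heuristic for the $\Lambda/(g^b)$ summand is the right idea, and the clean way to finish it is to note that for a distinguished $g$ coprime to all $\omega_n$, the quotient $\Lambda/(g^b)$ is $\mathbb{Z}_p$-free of rank $bd$, and multiplication by $\omega_n$ on it has determinant of $p$-valuation $bd\cdot n + O(1)$.
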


\noindent For the cyclotomic $\mathbb{Z}_p$-extension $F_\cyc$ of $F$, Iwasawa further conjectured that $\mu = 0$. Ferrero and Washington \cite{ferrerowash} later proved this conjecture for abelian number fields, but it remains open for general number fields.

In 2005, Coates and Sujatha \cite{coatessujatha05} found a surprising connection between Iwasawa's $\mu=0$ conjecture and the structure of fine Selmer groups of elliptic curves. More precisely, they proved the following important theorem:

\begin{theorem}\cite[Theorem 3.4]{coatessujatha05}\label{thm
} Let $E/F$ be an elliptic curve, and let $p$ be an odd prime such that $F(E_{p^\infty})/F$ is a pro-$p$ extension. Then $R(E/F_\text{cyc})^\vee$ is a finitely generated $\mathbb{Z}_p$-module if and only if Iwasawa’s   $\mu = 0$ conjecture holds for $F_\cyc$. \end{theorem}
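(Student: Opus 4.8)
The plan is to compare the fine Selmer group, reduced modulo $p$, with an unramified Iwasawa module of $F_\cyc$. Recall that $R(E/F_\cyc)=R(E_{p^\infty}/F_\cyc)$, where for a discrete $p$-primary (or finite) $\Zp[G_F]$-module $M$ we write $R(M/F_\cyc)=\ker\big(H^1(G_\Sigma(F_\cyc),M)\to\bigoplus_{w\mid\Sigma}H^1(F_{\cyc,w},M)\big)$ and $\Sigma$ is a fixed finite set of places of $F$ containing those above $p$, the bad primes of $E$ and the archimedean places; since every place of $F$ is finitely decomposed in the cyclotomic tower, these direct sums are finite. First I would pass to mod-$p$ coefficients: the dual $R(E/F_\cyc)^\vee$ is a compact $\Zp$-module, so by topological Nakayama it is finitely generated over $\Zp$ if and only if $R(E/F_\cyc)^\vee/p$ is finite, i.e.\ if and only if its dual $R(E/F_\cyc)[p]$ is finite; and a standard snake-lemma argument, applied to the exact sequences defining $R(E[p]/F_\cyc)$ and $R(E/F_\cyc)[p]$ via the Kummer sequences $0\to E[p]\to E_{p^\infty}\xrightarrow{p}E_{p^\infty}\to0$ (globally and at each $w\mid\Sigma$), shows that the natural map $R(E[p]/F_\cyc)\to R(E/F_\cyc)[p]$ has kernel and cokernel bounded by the finite groups $E_{p^\infty}(F_\cyc)/p$ and $\bigoplus_{w\mid\Sigma}E_{p^\infty}(F_{\cyc,w})/p$. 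Thus $R(E/F_\cyc)^\vee$ is finitely generated over $\Zp$ if and only if $R(E[p]/F_\cyc)$ is finite.

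Next I would deform $E[p]$ to the trivial module. This is the one point where the hypothesis that $F(E_{p^\infty})/F$ is pro-$p$ is used: it forces $F(E[p])/F$ to be a finite $p$-extension, so $G_F$ acts on $E[p]\cong\Fp^2$ through a finite $p$-group and therefore unipotently, giving a short exact sequence of Galois modules $0\to\Fp\to E[p]\to\Fp\to0$ with trivial action on the two outer copies. Feeding the long exact cohomology sequences of $G_\Sigma(F_\cyc)$ and of the local groups $G_{F_{\cyc,w}}$ attached to this extension through the definition of the fine Selmer group, I would obtain a dévissage relating $R(E[p]/F_\cyc)$ and $R(\Fp/F_\cyc)$ from which one reads off that $R(E[p]/F_\cyc)$ is finite if and only if $R(\Fp/F_\cyc)$ is finite; the relevant error terms are all governed by the finite groups $H^0(G_\Sigma(F_\cyc),-)$ and $H^0(F_{\cyc,w},-)$ for $E[p]$ and $\Fp$ and by the finitely many places $w\mid\Sigma$, hence are finite.

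Finally I would identify $R(\Fp/F_\cyc)$ with a class-group quotient. Because $G_F$ acts trivially on $\Fp$, one has $H^1(G_\Sigma(F_\cyc),\Fp)=\Hom(G_\Sigma(F_\cyc),\Fp)$, and a homomorphism lies in $R(\Fp/F_\cyc)$ exactly when it kills the decomposition group of every $w\mid\Sigma$; by class field theory this yields $R(\Fp/F_\cyc)=\Hom(\Gal(M_\Sigma/F_\cyc),\Fp)$, where $M_\Sigma$ is the maximal abelian exponent-$p$ extension of $F_\cyc$ that is unramified everywhere and completely split at all $w\mid\Sigma$. Setting $X_\infty=\varprojlim_n\Cl(F_n)\{p\}$, the maximal abelian exponent-$p$ extension of $F_\cyc$ unramified everywhere has Galois group $X_\infty/pX_\infty$, and imposing complete splitting at the finitely many $w\mid\Sigma$ shrinks this only by the subgroup generated by the corresponding Frobenius elements, hence by a group of order at most $p^{\#\{w\mid\Sigma\}}$. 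Therefore $R(\Fp/F_\cyc)$ is finite if and only if $X_\infty/pX_\infty$ is finite; and since $X_\infty$ is a finitely generated torsion module over $\Lambda=\Zp[[\Gal(F_\cyc/F)]]$ by classical Iwasawa theory, this holds if and only if $X_\infty$ is finitely generated over $\Zp$ — that is, if and only if Iwasawa's $\mu=0$ conjecture holds for $F_\cyc$. Combining the three steps gives the theorem.

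I expect the main obstacle to be the dévissage of the second step. The delicate point is that the fine local conditions involve the \emph{full} local cohomology groups, and over the infinite extension $F_\cyc$ the ambient groups $H^1(G_\Sigma(F_\cyc),\Fp)$ and $H^1(F_{\cyc,w},\Fp)$ need not be finite-dimensional over $\Fp$, so one must verify that the particular kernels and cokernels thrown up by the filtration $0\to\Fp\to E[p]\to\Fp\to0$ are genuinely finite rather than merely controllable. This succeeds thanks to exactly two features: the pro-$p$ hypothesis, which makes the graded pieces of the filtration \emph{trivial} (so that the error terms are built from finite $H^0$'s), and the fact that in the cyclotomic tower $\Sigma$ stays finite over $F_\cyc$ with every prime finitely decomposed (so those error terms involve only a finite direct sum of local contributions). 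The comparison in the third step between the ``split-at-$\Sigma$'' module and the genuine unramified Iwasawa module $X_\infty$ is easier, but relies on the same finiteness of the set of places above $\Sigma$.
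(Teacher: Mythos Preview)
The paper does not prove this theorem: it is quoted in the introduction, with attribution \cite[Theorem~3.4]{coatessujatha05}, solely to motivate the function-field analogue of \nameref{conja} studied later. There is no argument in the present paper to compare your proposal against.

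For what it is worth, your sketch is a correct and faithful outline of the original Coates--Sujatha proof: the topological Nakayama reduction from $R(E/F_\cyc)^\vee$ being finitely generated over $\Zp$ to $R(E[p]/F_\cyc)$ being finite; the unipotent filtration $0\to\Fp\to E[p]\to\Fp\to 0$ forced by the pro-$p$ hypothesis on $F(E_{p^\infty})/F$, together with the d\'evissage reducing to $R(\Fp/F_\cyc)$; and the class-field-theoretic identification of $R(\Fp/F_\cyc)$ with $\Hom(X_\infty/pX_\infty,\Fp)$ up to a finite group coming from the Frobenii at the finitely many places above $\Sigma$. The crux you identify --- that the error terms in the d\'evissage stay finite only because the graded pieces carry trivial Galois action and because $\Sigma$ is finitely decomposed in $F_\cyc$ --- is exactly the point Coates and Sujatha need as well.
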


\par Motivated by this result, Coates and Sujatha formulated the following conjecture:

\begin{conjecture*}[Conjecture A]\label{conja} 

For all elliptic curves $E$ over $F$, $R(E/F_\text{cyc})^\vee$ is a finitely generated $\mathbb{Z}_p$-module. 

\end{conjecture*}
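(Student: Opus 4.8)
The plan is to reduce Conjecture~A to Iwasawa's $\mu = 0$ conjecture, so that its difficulty is localized precisely. Theorem~3.4 of \cite{coatessujatha05}, recalled above, already yields the equivalence that $R(E/F_\cyc)^\vee$ is finitely generated over $\Zp$ if and only if $\mu = 0$ holds for $F_\cyc$, but only under the running hypothesis that $F(E_{p^\infty})/F$ be pro-$p$. The first task is to remove this hypothesis by base change. Fix an odd prime $p$ (the case $p = 2$ is entirely similar) and set $F' = F(E_p)$. The image of $\Gal(\overline{F}/F')$ in $\GL_2(\Zp)$ lies in the kernel of reduction modulo $p$, which is a pro-$p$ group, so $F'(E_{p^\infty})/F'$ --- which equals $F(E_{p^\infty})/F(E_p)$ --- is a pro-$p$ extension, and Theorem~3.4 of \cite{coatessujatha05} applies verbatim over $F'$: the module $R(E/F'_\cyc)^\vee$ is finitely generated over $\Zp$ if and only if $\mu = 0$ holds for the cyclotomic $\Zp$-extension of $F'$.

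The second task is to descend from $F'$ back to $F$. The extension $F'_\cyc/F_\cyc$ is finite, and the fine Selmer group --- being cut out by \emph{all} local cohomology classes --- behaves well under finite base change: the restriction map $R(E/F_\cyc)\to R(E/F'_\cyc)$ has kernel and cokernel that are subquotients of $H^{i}\!\big(\Gal(F'_\cyc/F_\cyc),\, E_{p^\infty}(F'_\cyc)\big)$ for $i = 1, 2$ together with local contributions at the finitely many primes ramifying in $F'/F$, and all of these are finite because $\Gal(F'_\cyc/F_\cyc)$ is finite and $E_{p^\infty}(F'_\cyc)$ is cofinitely generated over $\Zp$. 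Dualizing and pairing this with the corestriction map (so that $\cor\circ\res$ is multiplication by $[F'_\cyc : F_\cyc]$), and using that $\Zp[[\Gal(F'_\cyc/F)]]$ is module-finite over $\Lambda = \Zp[[\Gal(F_\cyc/F)]]$, one obtains that $R(E/F_\cyc)^\vee$ is finitely generated over $\Zp$ if and only if $R(E/F'_\cyc)^\vee$ is. Combining the two tasks, Conjecture~A for $E/F$ is equivalent to Iwasawa's $\mu = 0$ conjecture for the cyclotomic $\Zp$-extension of the single number field $F(E_p)$.

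Two consequences then drop out. When $F(E_p)$ is abelian over $\mathbb{Q}$ --- for example when $F = \mathbb{Q}$ and $E$ has a rational point of order $p$ --- the theorem of Ferrero and Washington \cite{ferrerowash} gives $\mu = 0$, hence Conjecture~A in those cases; and the Greenberg--Kleine neighbourhood machinery of \cite{Greenberg73, KleineCanad}, applied over $F(E_p)$, propagates the vanishing of $\mu$ --- equivalently, the finite generation of the fine Selmer dual --- from one $\Zp$-extension to every $\Zp$-extension in a $p$-adic neighbourhood, so a single verified instance spreads. The step I expect to be the genuine obstacle is the one that is classically open: establishing $\mu = 0$ for the cyclotomic $\Zp$-extension of an arbitrary, possibly non-abelian, number field. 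Because Theorem~3.4 of \cite{coatessujatha05} makes the equivalence sharp, this cannot be bypassed by any manipulation internal to the Selmer-group formalism; what the argument above actually delivers is the precise reduction of Conjecture~A to $\mu = 0$, together with every instance in which $\mu = 0$ is already known. The remaining points --- the $p = 2$ modification (take $F' = F(E_4)$), cofinite generation of the base-change error terms, and compatibility of the fine Selmer groups with the neighbourhood maps --- are routine bookkeeping.
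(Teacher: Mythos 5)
There is a fundamental mismatch here: the statement you were asked to prove is \emph{Conjecture A} of Coates and Sujatha, which the paper records precisely as an open conjecture --- it offers no proof, and none is known. Your proposal does not prove it either, and you say so yourself: after invoking \cite[Theorem 3.4]{coatessujatha05} over $F' = F(E_p)$ and descending to $F$, what you obtain is (at best) the well-known equivalence of Conjecture~A for $E/F$ with Iwasawa's $\mu = 0$ conjecture for the cyclotomic $\Zp$-extension of $F(E_p)$, and you then concede that establishing $\mu = 0$ for an arbitrary number field is ``classically open.'' A reduction of one open conjecture to another open conjecture is not a proof; it is exactly the state of the art already described in the paper's introduction (Theorem 1.2 there), so the proposal contains no step that settles the statement.

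Beyond this central point, two technical claims in your write-up deserve caution. First, your descent step asserts an ``if and only if'' between finite generation of $R(E/F_\cyc)^\vee$ and of $R(E/F'_\cyc)^\vee$ over $\Zp$. The direction you actually need for the reduction (from $F'$ down to $F$) is the easy one, since the restriction map $R(E/F_\cyc)\to R(E/F'_\cyc)$ has finite kernel by Hochschild--Serre and finiteness of $H^1(\Gal(F'_\cyc/F_\cyc), E_{p^\infty}(F'_\cyc))$; but the converse direction is not automatic from the corestriction trick when $p$ divides $[F'_\cyc : F_\cyc]$, which it typically does since $p \mid \#\GL_2(\mathbb{F}_p)$, so the sharp equivalence you state needs a genuine argument (this is the content of invariance results for Conjecture~A under finite extensions, not bookkeeping). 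Second, the Greenberg--Kleine neighbourhood results (and the function-field analogues proved in this paper, Theorems \ref{mainthm:1} and \ref{mainthm:2}) propagate the finite-generation/torsion properties only to $\Zp$-extensions in a neighbourhood $\mathcal{U}(K_\infty,m)$ of a given extension where the property is already known; they do not spread a single verified instance to \emph{every} $\Zp$-extension, and in the number-field cyclotomic setting of Conjecture~A there is only one $\Zp$-extension at issue anyway, so this machinery contributes nothing toward the conjecture itself.
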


\par Following   \nameref{conja},  the fine Selmer groups over  cyclotomic $\Z_p$-extensions of a number field have been studied  by various authors including \cite{LimMurty,jhasujatha, shekhar, wuthrichfine} etc.

A natural question  is how the Iwasawa invariants vary across different $\mathbb{Z}_p$-extensions of $F$. Greenberg addressed this by introducing a topology on $\mathcal{E}(F)$, the set of all $\mathbb{Z}_p$-extensions of $F$, allowing for a comparative analysis of Iwasawa invariants across distinct $\Z_p$-extensions. Using Fukuda modules, Kleine \cite{Kleine} examined the local behavior of Iwasawa invariants across $\mathcal{E}(F)$. He later generalized these ideas to Selmer groups associated with elliptic curves having good-ordinary reduction at primes above $p$ \cite{KleineCanad}.

In \cite{GhoshJhaShekhar}, the authors initiated the study of fine Selmer groups over function fields of characteristic $p$. These fine Selmer groups are subgroup of the classical Selmer groups. In this article, we investigate the variation of Iwasawa invariants of fine Selmer groups over certain neighborhoods of $\mathbb{Z}_p$-extensions of a function field $K$, which is defined over a finite field of characteristic $p$. Our main results are Theorem \ref{mainthm:1} and Theorem \ref{mainthm:2}. We are also working on generalizing these ideas to Selmer groups in both equal and unequal characteristic case over function fields as a part of our future project.

The structure of the article is as follows: In \S \ref{Preliminaries}, we review key definitions and preliminaries necessary for the remainder of the article. In \S \ref{mainresults}, we present the proofs of our main results. Finally, in \S \ref{example}, we conclude with a discussion of an example.


\section{Preliminaries}\label{Preliminaries}
\par Let $C_K$ be a curve over the finite field $\mathbb{F}$ of characteristic $p$, with $K$ as its function field. Let $E/K$ be an elliptic curve. Consider $U$ to be a dense open subset of $C_K$ such that $E/K$ has good reduction at every place in $U$.  Denote by $\Sigma_K$ the set of all places of $K$, and let $S$ represent the set of places of $K$ outside $U$. 

Let $L$ be a finite extension   of $K$ inside $K_S$, the maximal algebraic extension of $K$ unramified outside $S$. Let $v$ be any  prime of $K$ and $w  $ denote  a prime of $L$. Define

\begin{equation}\label{equationdefine}
	\begin{aligned}
		J_v^1(E/L):=\underset{w\mid v}\bigoplus	\frac{H^1_{\fla}(L_w, E_{p^\infty})}{im(\kappa_w)} \ \text{and }
		K_v^1(E/L):= \underset{w\mid v}\bigoplus	H^1_{\fla}(L_w, E_{p^\infty}).	
	\end{aligned}
\end{equation}

 Here $H^i_{\fla}(-,-) $ denote the flat cohomology \cite[Chapters II, III]{Milne1986} and $\kappa_w: E(L_w)\otimes {\Q_p}/{\Z_p}\injects H^1_{\fla}(L_w,E_{p^\infty})$ is induced by the Kummer map. The following definition of the Selmer group is recalled from \cite{KatoTrihan}:
\begin{definition}\cite[Prop. 2.4]{KatoTrihan}
	With $\Sigma_K, S \text{ and } K\subset L \subset K_S$ as above, define 
	\begin{small}
	\begin{equation}\label{definition24}
		S(E/L):= \ker\big(H^1_{\fla}(L, {E}_{p^\infty})\lra \underset{v\in \Sigma_K}\bigoplus J_v^1(E/L)\big).
	\end{equation}
\end{small}
	We define the $S$-fine Selmer group as:
	\begin{small}
	\begin{equation}\label{definition4.1}
		\begin{aligned}
			R^S(E/L):={} \ker\left(H^1_{\fla}(L, E_{p^\infty})\lra \underset{v\in S}\bigoplus K_v^1(E/L) \underset{v\in \Sigma_K\setminus S}\bigoplus J_v^1(E/L)\right)\\
			\cong \ker\left (S(E/L)\lra \underset{w\mid v, v\in S}\bigoplus E(L_w)\otimes \Q_p/\Z_p\right).
		\end{aligned}
	\end{equation}
\end{small}
\end{definition}
For an infinite algebraic extension $\mathcal{L}$ of $K$, these definitions extend as usual by taking the inductive limit over finite subextensions of $\mathcal{L}$ over $K$.

\subsection{ Examples of $\Z_p$-extensions of function field}
 Let $\Ff^{(p)}$ be the unique subfield of $\overline{\Ff}$, the algebraic closure of $\Ff$, such that $\gal(\Ff^{(p)}/\Ff)\cong \Z_p$. Set $K_\infty:=K\Ff^{(p)}$. Note that $K_\infty/K$ is unramified everywhere.  This $\Z_p$-extension $K_\infty$ is referred to as in the literature as the "arithmetic" $\Z_p$-extension. 
 
 Unlike the number fields, in case of function fields, we can construct infinitely many $\Z_p^d$-extensions of $K$ for any $d\geq 1$. 
 
 { We will now explore a few examples of $\Z_p^d$-extensions constructed from Carlitz modules. These $\Z_p^d$-extensions are constructed in the following manner:}

 	Let $K=\Ff(t)$ and $P(t)=a_nt^n+\cdots a_0 \in \Ff [t]$. We define the Carlitz polynomial $[P(t)](X)$ with coefficients in $\Ff[t]$ recursively as follows:

		$[1] (X)=X$, 
		
		$[t] (X)=X^p +tX$,
		
		$[t^n] (X)=[t]([t^{n-1}](X))$ and
		
		$[a_n t^n+\cdots+a_1 t+a_0](X)= a_n[t^n](X)+\cdots +a_1[t](X)+a_0 (X).$
		
		{ Consider a field extension $F$ of $K$}. Then $F$ can be thought of as a $\Ff[t]$-module, where the action of $\Ff[t]$  is given by the Carlitz polynomials. 
		Choose  a prime $\mathfrak P$ of $\Ff[t]$. For $n>0$, let
		$$\Lambda_{\mathfrak P^n}:=\{\lambda\in \overline{\Ff(t)}| [\mathfrak{P}^n](\lambda)=0\}.$$
		
		{Here} $K(\Lambda_{\mathfrak P ^n})/K$ is  Galois  with $\gal(K(\Lambda_{\mathfrak P ^n})/K)\cong (\Ff[t]/\mathfrak{P}^n)^\times$. Put $\widetilde{K}:=\underset{n\geq 1}\bigcup K(\Lambda_{\mathfrak{P}^n})$, then $\gal(\widetilde K /K)\cong \Z_p^\N \times (\Ff[t]/\mathfrak{P})^\times$. 

  The $\Z_p^d$-extensions obtained from $\widetilde {K}$, for $d\geq 1$ are ramified only at the prime $\mathfrak P$ and they are totally ramified at that prime \cite[Proposition 12.7]{Rosen}. The $\Z_p$-extensions, thus obtained, are examples of  ``geometric" $\Z_p$-extensions.
\subsection{Greenberg topology}
For a $\Z_p$-extension $K_\infty/K$, we define $\mathcal{P}(K_\infty)$ as the set of primes in $K$, ramified in $K_\infty$.
Let $\mathcal{E} (K)$ be the set of all  $\Z_p$-extensions of $K$, which are finitely ramified. Define
\[
\mathcal{U}(K_\infty,n):=\{K_\infty'\in \mathcal{E} (K)| [K_\infty\cap K_\infty': K]\geq p^n, \mathcal{P}(K_\infty')\subset\mathcal{P}(K_\infty) \},
\]
for some $n\geq 0$ and $K_\infty'\in \mathcal{E}(K)$. We define a topology on $\mathcal{E} (K) $ generated by these sets.

Let $E$ be an elliptic curve defined over $K$.  Define the set
\[
S_1:=\{v\in\Sigma_K\ |\ v \textrm{ ramifies in } K_\infty\}\bigcup \{v\in \Sigma_K\ |\ E \textrm{ has bad reduction at } v\}.
\]
 Note that $S_1$ is a finite set. Let $S_2$ be another finite set of primes of $K$, where $S_1 \subseteq S_2$, and for any prime $v \in S_2 \setminus S_1$, the prime $v$ splits completely in $K_\infty$.

 We consider a refinement of the above topology: for ${K}_\infty\in \mathcal{E}(K)$ and $m\in \N$, let $\calV(E, K_\infty,m)\subset \mathcal{U}(K_\infty,m)$ denote the subset of $\Z_p$-extensions $\widetilde{K}_\infty$ of $K$ in $\mathcal{U}(K_\infty,m)$ such that each prime of $S_2\setminus S_1$ (that is totally
split in $K_\infty$)  splits completely  in $\widetilde{K}_\infty$. We consider the topology on $\mathcal{E}(K)$, generated by the sets  $\calV(E, K_\infty,m)$, {i.e, the smallest topology  on $\calE(K)$ generated by these sets. This is a finer topology on $\mathcal{E}(K)$ than the topology generated by the sets 
 $\{\mathcal{U}(K_\infty,m), m \geq 0\}$. Indeed it is easy to see that if $K_\infty^\prime \in \mathcal{U}(K_\infty,m)$, then $L_\infty \in \calV(E, K_\infty',m+1) \subset \mathcal{U}(K_\infty,m)$. Recall, that a similar type of refined neighbourhoods were also considered in \cite{KleineCanad}.
}
\subsection{Fukuda Modules}
 In \cite{Kleine}, Kleine introduced a special family of Iwasawa modules $X = \varprojlim X_n$. These modules were designed to derive information about the projective limit by utilizing data from a sufficiently large number of layers $X_n$.
\begin{definition}\label{defn:Fukudamodules}
Let $1 \le d \in \mathbb{N}$. Let $R$ be the ring of formal power series over $\mathbb{Z}_p$ in $d$ variables with maximal ideal $\mathfrak{m} = (p, T_1, \ldots, T_d)$. Let $X = \varprojlim X_n$ denote the projective limit of $R$-modules $X_n$, $n \in \mathbb{N}$, each assumed to be an abelian pro-$p$ group. Further assume that $X$ is compact as an $R$-module with respect to the $\mathfrak{m}$-adic topology. Let $Y_n \subset X$ denote the kernel of the projection map $X \to X_n$ for each $n$. Let $C_1, C_2, C_3 \in \mathbb{N}$ be $p$-powers. Then $X$ is called a Fukuda $R$-module with parameters $(C_1, C_2, C_3)$ if there exists a family of compact $R$-submodules $(Z_n)_n$ of $X$ such that 
$$|\coker(\pr_n)| \le C_1, Z_{n+1} \subset \mathfrak{m} \cdot Z_n, [Y_n : (Y_n \cap Z_n)] \le C_2,$$
and
$$[Z_n : (Y_n \cap Z_n)] \le C_3$$ for each $n \in \mathbb{N}$.
\end{definition}

Consider a $\mathbb{Z}_p^d$-extension $K_\infty/K$, where $d \geq 1$. Write $K_\infty = \bigcup_{n} K_n$ with $\Gal(K_n/K) \cong (\Z/p^n\Z)^d$. Let $X = \varprojlim X_n$, where $X_n$ is the $p$-primary subgroup of the ideal class group of $K_n$. Then, under appropriate assumptions on the ramification of primes in $K_\infty/K$, one can show that $X$ is a Fukuda-$R$-module with parameters $(1,1,1)$ (see \cite[\S 3]{KleineCanad}), where $R \cong \mathbb{Z}_p[[\Gal(K_\infty/K)]]$.

Before moving further, we recall the following result from \cite{BandiniLonghi}:
\noindent \begin{lemma}\cite[Lemma 4.1]{BandiniLonghi}\label{lemma:BandiniLonghi}
    Let $\Gamma \cong \mathbb{Z}_p^d$ and $B$ a finite $p$-primary $\Gamma$-module. Then 
\[
|H^1(\Gamma, B)| \leq |B|^d \quad \text{and} \quad |H^2(\Gamma, B)| \leq |B|^{\frac{d(d-1)}{2}}.
\]
\end{lemma}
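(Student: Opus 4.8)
The plan is to prove both inequalities simultaneously by induction on $d$, using the Hochschild--Serre spectral sequence for a decomposition $\Gamma \cong \Gamma' \times \Gamma''$ with $\Gamma' \cong \mathbb{Z}_p^{d-1}$ and $\Gamma'' \cong \mathbb{Z}_p$, together with the fact that $\mathrm{cd}_p(\mathbb{Z}_p) = 1$. Since $B$ is finite, its $\Gamma$-action factors through a finite quotient, so every cohomology group appearing below is finite; in particular, for a finite $\mathbb{Z}_p$-module $M$ we will repeatedly use that $|H^0(\Gamma'',M)| \le |M|$ and that an upper bound on $|M|$ passes to subgroups and quotients of $M$.

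\emph{Base case $d = 1$.} Let $\Gamma \cong \mathbb{Z}_p$ with topological generator $\gamma$ and let $M$ be a finite $\Gamma$-module. Then $H^0(\Gamma, M) = \ker(\gamma - 1 \mid M)$, $H^1(\Gamma, M) = \operatorname{coker}(\gamma - 1 \mid M)$, and $H^i(\Gamma, M) = 0$ for $i \ge 2$. As $\gamma - 1$ is an endomorphism of the finite abelian group $M$, its kernel and cokernel have equal order, so $|H^1(\Gamma, M)| = |H^0(\Gamma, M)| \le |M|$, and $|H^2(\Gamma, M)| = 1 = |M|^{0}$. Since $\binom{1}{2} = 0$, this is exactly the assertion for $d = 1$.

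\emph{Inductive step.} Assume the bounds hold for $\mathbb{Z}_p^{d-1}$-modules and write $\Gamma \cong \Gamma' \times \Gamma''$ as above. Because $\mathrm{cd}_p(\Gamma'') = 1$, the spectral sequence $H^i\big(\Gamma'', H^j(\Gamma', B)\big) \Rightarrow H^{i+j}(\Gamma, B)$ is concentrated in the columns $i \in \{0,1\}$, so for each $n$ it yields a short exact sequence
\[
0 \longrightarrow H^1\!\big(\Gamma'',\, H^{n-1}(\Gamma', B)\big) \longrightarrow H^n(\Gamma, B) \longrightarrow H^0\!\big(\Gamma'',\, H^{n}(\Gamma', B)\big) \longrightarrow 0 .
\]
For $n = 1$ the left-hand term is $H^1(\Gamma'', B^{\Gamma'})$, of order at most $|B^{\Gamma'}| \le |B|$ by the base case, and the right-hand term embeds in $H^1(\Gamma', B)$, of order at most $|B|^{\,d-1}$ by the inductive hypothesis; multiplying, $|H^1(\Gamma, B)| \le |B|^{\,d}$. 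For $n = 2$ the left-hand term $H^1\!\big(\Gamma'', H^1(\Gamma', B)\big)$ has order at most $|H^1(\Gamma', B)| \le |B|^{\,d-1}$ by the base case, while the right-hand term embeds in $H^2(\Gamma', B)$, of order at most $|B|^{\binom{d-1}{2}}$ by the inductive hypothesis; hence
\[
|H^2(\Gamma, B)| \;\le\; |B|^{\,d-1}\cdot |B|^{\binom{d-1}{2}} \;=\; |B|^{\,(d-1)+\binom{d-1}{2}} \;=\; |B|^{\binom{d}{2}},
\]
the last equality being Pascal's identity $(d-1) + \binom{d-1}{2} = \binom{d}{2}$. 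This closes the induction and gives both stated bounds.

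The argument is essentially formal; the only point needing care is the collapse of the Hochschild--Serre spectral sequence to the displayed short exact sequences (which is where $\mathrm{cd}_p(\mathbb{Z}_p) = 1$ enters) and the resulting multiplicativity of orders along those sequences. A spectral-sequence-free alternative is to observe that $H^\bullet(\Gamma, B)$ is computed by the Koszul complex on the $d$ commuting operators $\gamma_1 - 1, \dots, \gamma_d - 1$ acting on $B$, whose degree-$n$ term has order $|B|^{\binom{d}{n}}$; the bounds then follow since $H^n(\Gamma, B)$ is a subquotient of that term. I would keep the inductive proof as the primary route, since it is the most economical to write down rigorously.
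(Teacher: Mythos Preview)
Your proof is correct. The induction via Hochschild--Serre with $\Gamma''\cong\Z_p$ as the quotient and $\Gamma'\cong\Z_p^{d-1}$ as the normal subgroup is sound: since $\mathrm{cd}_p(\Z_p)=1$, the $E_2$-page has only two nonzero columns, all differentials vanish, and the resulting two-term filtration gives exactly the short exact sequences you wrote down. The base case and the arithmetic with binomial coefficients are also fine.

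There is no comparison to make with the paper: the lemma is stated with a citation to \cite[Lemma~4.1]{BandiniLonghi} and is not proved in the paper itself. Your argument is in fact the standard one and is essentially the proof given in Bandini--Longhi; the Koszul-complex alternative you sketch at the end is another well-known route to the same bounds and would give the more general inequality $|H^n(\Gamma,B)|\le |B|^{\binom{d}{n}}$ for all $n$.
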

\section{Main Results}\label{mainresults}
\par Let $K$ be a function field over the finite field of characteristic $p$. Let $K_\infty$ be a $\Z_p$-extension of $K$ that is finitely ramified and $\Gamma:=\Gal(K_\infty/K)$. Let $E$ be a non-isotrivial elliptic curve (i.e; $j(E)\notin \Ff$), with either good ordinary or split multiplicative reduction at all the primes of $K$. 

Recall the definition of $S_1$ as a finite set of primes of $K$, consisting only of those primes that are ramified in $K_\infty$ and those where $E$ has bad reduction. Similarly, recall that $S_2$ is another finite set of primes of $K$, with $S_1 \subseteq S_2$, where any prime $v \in S_2 \setminus S_1$ splits completely in $K_\infty$.

We first prove a control theorem from $R^S(E/K_n)\lra R^S(E/K_\infty)^{\Gamma_n}$, where the set  $S$ be either $S_1$ or $S_2$ .
\begin{theorem}\label{thm:control}
     Let $E$ be a non-isotrivial elliptic curve (i.e; $j(E)\notin \Ff$) defined over $K$. Let $S$ be either $S_1$ or $S_2$. Assume that all places in $S$, $E$ has 
     either good ordinary or split multiplicative reduction. 
    Then the kernel and cokernel of the natural map
    \[
    R^S(E/K_n)\lra R^S(E/K_\infty)^{\Gamma_n}
    \]
    are finite and bounded independently of $n$.
\end{theorem}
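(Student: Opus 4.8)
The plan is to run the usual control-theorem argument in the style of Mazur, in the characteristic-$p$, flat-cohomology setting of \cite{KatoTrihan,BandiniLonghi}, by comparing the defining sequences of $R^S$ at the finite and infinite levels and chasing the resulting diagram. Write $\Gamma_n=\Gal(K_\infty/K_n)\isom\Z_p$ and set
\[
\mathcal{J}^S(E/L):=\bigoplus_{v\in S}K_v^1(E/L)\ \oplus\ \bigoplus_{v\in\Sigma_K\setminus S}J_v^1(E/L),
\]
so that \eqref{definition4.1} gives a left-exact sequence $0\to R^S(E/L)\to H^1_{\fla}(L,E_{p^\infty})\xrightarrow{\lambda_L}\mathcal{J}^S(E/L)$. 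Applying this with $L=K_n$ and with $L=K_\infty$, passing to $\Gamma_n$-invariants in the latter (which preserves left-exactness) and cutting the rightmost terms down to $\im(\lambda_{K_n})$ and $(\im(\lambda_{K_\infty}))^{\Gamma_n}$, one obtains a commutative diagram with exact rows
\begin{equation*}
\begin{array}{ccccccccc}
0 & \to & R^S(E/K_n) & \to & H^1_{\fla}(K_n,E_{p^\infty}) & \to & \im(\lambda_{K_n}) & \to & 0\\
  &     & \downarrow r_n & & \downarrow h_n & & \downarrow g_n & & \\
0 & \to & R^S(E/K_\infty)^{\Gamma_n} & \to & H^1_{\fla}(K_\infty,E_{p^\infty})^{\Gamma_n} & \to & (\im(\lambda_{K_\infty}))^{\Gamma_n} & &
\end{array}
\end{equation*}
whose vertical maps are restriction. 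The snake lemma gives an exact sequence $0\to\ker r_n\to\ker h_n\to\ker g_n\to\coker r_n\to\coker h_n$, so it suffices to bound $\ker h_n$, $\coker h_n$ and $\ker g_n$ independently of $n$; here $\ker g_n$ is contained in the kernel of the full local restriction map, which I analyse one place at a time.

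For the middle map, $\cd_p(\Gamma_n)=\cd_p(\Z_p)=1$, so the Hochschild--Serre spectral sequence for $K_\infty/K_n$ in flat cohomology collapses to the inflation--restriction sequence and yields $\coker h_n=0$ together with $\ker h_n\isom H^1(\Gamma_n,E_{p^\infty}(K_\infty))$. This last group is finite and bounded independently of $n$: the connected part of $E[p^\infty]$ has no nontrivial points over any field of characteristic $p$, so only the \'etale quotient is relevant, and a short argument on the $\Gamma_n$-coinvariants — using that the Mordell--Weil groups $E(K_m)$ are finitely generated, hence $E_{p^\infty}(K_m)$ is finite, so that $\Gamma$ cannot act trivially on a $\Q_p/\Z_p$-subquotient — bounds it, with Lemma \ref{lemma:BandiniLonghi} ($d=1$) handling the finite part; non-isotriviality of $E$ enters at exactly this point. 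Thus $|\ker r_n|\le|\ker h_n|$ is bounded, and, since $\coker h_n=0$, $|\coker r_n|\le|\ker g_n|$.

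For the right-hand map I go through the local components of $g_n$. If $v$ splits completely in $K_\infty$ — in particular every $v\in S_2\setminus S_1$, by the defining property of $S_2$, and every $v\notin S_1$ with trivial local extension — the decomposition subgroup is trivial, the primes above $v$ correspond bijectively at all levels, and the local comparison map is an isomorphism. For the remaining $v\notin S$ the place is unramified in $K_\infty$ with good reduction, and there $H^1(K_v^{\unr}/K_v,E(K_v^{\unr}))=0$ (Lang's theorem on the reduction, together with the formal-group part); this passes to the intermediate unramified $\Z_p$-extension and forces the local comparison map to be injective, so these (infinitely many) places contribute nothing. Hence $\ker g_n=\bigoplus_{v\in S_1}\bigoplus_{w\mid v}H^1(\Gamma_{n,w},E_{p^\infty}(K_{\infty,w}))$, a direct sum over the fixed finite set $S_1$, with the number of $w\mid v$ bounded in $n$ (a closed subgroup of $\Gamma\isom\Z_p$ is $\{0\}$ or $\isom\Z_p$). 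Each summand is finite and bounded in $n$: at a place of split multiplicative reduction this follows from the Tate uniformization, using that $\mu_{p^\infty}$ has no nontrivial points in characteristic $p$; at a place of good ordinary reduction one again discards the connected part of $E[p^\infty]$ and argues on the \'etale quotient as above, applying Lemma \ref{lemma:BandiniLonghi}. Substituting these bounds into the snake sequence proves the theorem.

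I expect the main obstacle to be precisely these finiteness and boundedness statements for the cohomology of $E_{p^\infty}$ over the \emph{infinite} fields $K_\infty$ and $K_{\infty,w}$ ($v\in S_1$): this is where non-isotriviality (globally) and the good-ordinary/split-multiplicative hypothesis at $S$ (locally) genuinely do the work, and it must be carried out with care because $E[p^\infty]$ is a non-\'etale $p$-divisible group in characteristic $p$, so one works with flat cohomology throughout and with the characteristic-$p$ (Kato--Trihan) forms of the Kummer sequence and local duality underlying the identifications above. The remaining ingredients — functoriality of the diagram, the reduction to images of $\lambda_L$, and the counting of primes above each $v$ — are routine.
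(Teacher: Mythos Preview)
Your proposal is correct and follows essentially the same route as the paper: the identical commutative diagram, snake lemma, and place-by-place analysis of the local restriction maps. The only notable difference is in how the key boundedness inputs are obtained—where you argue abstractly via the connected--\'etale filtration and the vanishing of $H^1(\Z_p,\Q_p/\Z_p)$ under nontrivial action, the paper instead cites \cite[Lemma~3.1]{BandiniLonghi} to get that $E(K_\infty)[p^\infty]$ is genuinely \emph{finite} (non-isotriviality), and at good ordinary $v\in S_1$ uses the formal-group exact sequence together with the fact that such $v$ is ramified (hence the residue field of $K_{\infty,w}$ is finite) to get $E(K_{\infty,w})[p^\infty]$ finite; for $v\notin S$ it simply cites \cite{Tan} rather than Lang's theorem.
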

\begin{proof}
Consider the following commutative diagram:
\[\begin{tikzcd}
	0 & {R^S(E/K_\infty)^{\Gamma_n}} & {H^1_{\fla}(K_\infty, E[p^\infty])^{\Gamma_n}} & {\left(\underset{v\mid S}\bigoplus\ K^1_v(E/K_\infty)\ \underset{v\nmid S}\bigoplus\ J^1_v(E/K_\infty)\right)^{\Gamma_n}} \\
	0 & {R^S(E/K_n)} & {H^1_{\fla}(K_n, E[p^\infty])} & {\underset{v\mid S}\bigoplus\ K^1_v(E/K_n)\ \underset{v\nmid S}\bigoplus\ J^1_v(E/K_n)}
	\arrow[from=1-1, to=1-2]
	\arrow[from=1-2, to=1-3]
	\arrow[from=1-3, to=1-4]
	\arrow[from=2-1, to=2-2]
	\arrow["{\alpha_n}", from=2-2, to=1-2]
	\arrow[from=2-2, to=2-3]
	\arrow["{\beta_n}", from=2-3, to=1-3]
	\arrow[from=2-3, to=2-4]
	\arrow["{\gamma_n=\underset{v}\oplus \gamma_{n,v}}", from=2-4, to=1-4]
\end{tikzcd}\]
First, observe that $\ker(\beta_n) = H^1(\Gamma_n, E(K_\infty)[p^\infty])$. Since $E$ is a non-isotrivial elliptic curve, it is known that $E(K_\infty)[p^\infty]$ is finite \cite[Lemma 3.1]{BandiniLonghi}. Therefore, $\ker(\beta_n)$ is finite. In fact, by Lemma \ref{lemma:BandiniLonghi}, its size is bounded by $|E(K_\infty)[p^\infty]|$.


Next, for any prime $v \mid S$, we consider three cases. The first case arises when $v$ is a bad prime. In this situation, for any prime $w \mid v$, $E(K_{\infty, w})[p^\infty]$ is finite (see proof of \cite[Theorem 4.12]{BandiniLonghi}). 

The second case is when $v$ is a good ramified prime. Let $w\mid v$ be a prime of $K_\infty$. Recall the short exact sequence, 
\[
    0\lra\widehat{E}(\mathcal{O}_{K_{\infty,w}})\lra E(K_{\infty,w})\lra \overline{E}(\Ff_{K_{\infty,w}}) \lra 0,
\]
where  $\widehat{E}$ and $\overline{E}$ be the formal group associated to $E$ and the reduction of $E$ at the prime of $K_{\infty,w}$ respectively.  Here, $\calO_{K_{\infty,w}}$ denote the ring of integers of $K_{\infty,w}$ and $\Ff_{K_{\infty,w}}$ denote the residue field at that prime.  By \cite[Lemma 2.5.1]{Tan}, $\widehat{E}(\mathcal{O}_{K_{\infty,w}})$ is a torsion free $\Z_p$-module.
 Therefore, $E(K_{\infty, w})[p^\infty] \cong E(\Ff_{K_{\infty,w}})[p^\infty]$. Since $K_\infty$ is a $\mathbb{Z}_p$-extension of $K$ which is ramified at finitely many places, there exists a subextension $K_n$ of $K_\infty$ such that the primes of $K_n$ that are ramified are totally ramified in $K_\infty$ \cite[Lemma 13.3]{washingtonbook}.  Consequently, $E(\Ff_{K_{\infty,w}})[p^\infty]$ is also finite. The third case occurs when $S=S_2$. Then there exists $v\in S$, which splits completely in $K_\infty$, then $\ker(\gamma_{n,v})=0$.
 Therefore, we conclude that $\ker(\gamma_{n,v})$ is finite and bounded for every $v \in S$ (by Lemma \ref{lemma:BandiniLonghi}).

For primes $v \notin S$, we have $\ker(\gamma_{n,v}) = 0$ \cite{Tan}. The result then follows by applying the snake lemma.
\end{proof}
\par As a corollary, we obtain the following result:
 \begin{corollary}
     Under the assumptions of Theorem \ref{thm:control}, $R^S(E/K_\infty)^\vee = \varprojlim R^S(E/K_n)^\vee$ is a Fukuda-$\Lambda$-module with parameters $(C_1, C_2, 1)$ for suitable $p$-powers $C_1$ and $C_2$.
 \end{corollary}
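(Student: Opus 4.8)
The plan is to produce the family $(Z_n)_n$ explicitly by pushing Theorem~\ref{thm:control} through Pontryagin duality. Write $\Lambda := \Z_p[[\Gamma]] \cong \Z_p[[T]]$ with $T = \gamma_0-1$ for a topological generator $\gamma_0$ of $\Gamma$, set $\mathfrak m = (p,T)$, and put $X := R^S(E/K_\infty)^\vee$ and $X_n := R^S(E/K_n)^\vee$. Since $R^S(E/K_\infty) = \varinjlim_n R^S(E/K_n)$, duality gives $X = \varprojlim_n X_n$ with the corestriction transition maps, and each $X_n$ is an abelian pro-$p$ group because $R^S(E/K_n)$ is of $\Z_p$-cofinite type \cite{KatoTrihan}. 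First I would verify the standing hypotheses of Definition~\ref{defn:Fukudamodules}: the $n=0$ case of Theorem~\ref{thm:control}, dualized, shows that $X_{\Gamma_0} = (R^S(E/K_\infty)^{\Gamma})^\vee$ agrees with $X_0$ up to finite kernel and cokernel, so $X/\mathfrak m X = X_{\Gamma_0}/pX_{\Gamma_0}$ is finite; topological Nakayama then shows $X$ is a finitely generated $\Lambda$-module, hence compact for the $\mathfrak m$-adic topology (which here coincides with the profinite one).

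Next I would dualize the control map for general $n$. The natural map $R^S(E/K_n) \to R^S(E/K_\infty)^{\Gamma_n}$ dualizes to
$$g_n \colon X/\omega_n X = \bigl(R^S(E/K_\infty)^\vee\bigr)_{\Gamma_n} \longrightarrow X_n, \qquad \omega_n := (1+T)^{p^n}-1,$$
where $\ker g_n$ (resp. $\coker g_n$) is the Pontryagin dual of the cokernel (resp. kernel) of the map in Theorem~\ref{thm:control}; both are therefore finite $p$-groups whose orders are bounded independently of $n$, so I may set $C_1 := \sup_n |\coker g_n|$ and $C_2 := \sup_n |\ker g_n|$, which are $p$-powers. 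The projection $\pr_n \colon X \to X_n$ factors as the canonical surjection $X \twoheadrightarrow X/\omega_n X$ followed by $g_n$, so $\coker(\pr_n) = \coker(g_n)$ has order $\leq C_1$, while $Y_n = \ker(\pr_n)$ is the preimage of $\ker g_n$ under $X \twoheadrightarrow X/\omega_n X$; in particular $\omega_n X \subseteq Y_n$ with $[Y_n : \omega_n X] = |\ker g_n| \leq C_2$.

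Then I would take $Z_n := \omega_n X$, a compact $\Lambda$-submodule of $X$. Since $Z_n \subseteq Y_n$ we get $Y_n \cap Z_n = Z_n$, whence $[Z_n : (Y_n \cap Z_n)] = 1$ and $[Y_n : (Y_n \cap Z_n)] = [Y_n : \omega_n X] \leq C_2$, which gives the third parameter $C_3 = 1$. For the remaining condition $Z_{n+1} \subseteq \mathfrak m \cdot Z_n$, I would use the factorization $\omega_{n+1} = \nu_n \omega_n$ with $\nu_n = \sum_{i=0}^{p-1}(1+T)^{i p^n}$; since $\nu_n \equiv p \pmod{(T)}$ we have $\nu_n \in \mathfrak m$, and hence $Z_{n+1} = \nu_n(\omega_n X) \subseteq \mathfrak m \cdot \omega_n X = \mathfrak m \cdot Z_n$. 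This exhibits $X$ as a Fukuda-$\Lambda$-module with parameters $(C_1, C_2, 1)$.

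The genuinely substantive input is Theorem~\ref{thm:control} itself; granting it, the only steps that need care are the standard-but-fiddly ones, namely matching $\ker$ with $\coker$ correctly when dualizing, identifying the $\Gamma_n$-coinvariants of $X$ with $X/\omega_n X$, and the topological Nakayama argument used to conclude that $X$ is finitely generated over $\Lambda$ (so that its $\mathfrak m$-adic topology is the profinite one). I do not expect any serious obstacle beyond this bookkeeping.
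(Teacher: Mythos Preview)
Your proposal is correct and is precisely the argument the paper has in mind: the paper's own proof merely cites \cite[Corollary~4.2]{KleineCanad}, and your write-up unpacks that reference by taking $Z_n=\omega_n X$ and reading off the parameters $(C_1,C_2,1)$ from the dualized bounds of Theorem~\ref{thm:control}. The bookkeeping you flag (duality swapping kernel/cokernel, $X_{\Gamma_n}=X/\omega_n X$, topological Nakayama, and $\nu_n\in\mathfrak m$) is exactly what is needed, and there is no gap.
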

\begin{proof}
    The proof of this statement is similar to \cite[Corollary 4.2]{KleineCanad}.
\end{proof}
For the proof of the next proposition, we closely follow the proof of \cite[Theorem 4.5]{KleineCanad}
\begin{proposition}\label{prop:FUkudaS1}
Let us keep the hypothesis and assumption of Theorem \ref{thm:control}. Then there exist integers $m, C_1, C_2 \in \mathbb{N}$ such that $R^{S_1}(E/\widetilde{K}_\infty)^\vee$ is a Fukuda-$\Lambda$-module with bounded parameters $(C_1, C_2, 1)$ for each $\widetilde{K}_\infty \in \calU( K_\infty, m)$.
\end{proposition}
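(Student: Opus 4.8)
The plan is to show that a single $m$ can be chosen so that every estimate entering Theorem~\ref{thm:control} and its Corollary holds uniformly for all $\widetilde K_\infty\in\calU(K_\infty,m)$. For a fixed $\widetilde K_\infty$ the Corollary already gives a Fukuda-$\Lambda$-module structure on $R^{S_1}(E/\widetilde K_\infty)^\vee$ with parameters of the shape $(C_1,C_2,1)$; looking into that argument (a transcription of \cite[Corollary 4.2]{KleineCanad}) one sees the third parameter is identically $1$, while $C_1$ and $C_2$ are controlled by the orders of the kernel and cokernel of $R^{S_1}(E/\widetilde K_n)\to R^{S_1}(E/\widetilde K_\infty)^{\widetilde\Gamma_n}$, which Theorem~\ref{thm:control} bounds uniformly in $n$. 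Tracing the snake-lemma step of Theorem~\ref{thm:control} with $d=1$ (as $\Lambda\cong\Z_p[[T]]$ here) --- so that $|H^2(\widetilde\Gamma_n,B)|\leq|B|^{d(d-1)/2}=1$ by Lemma~\ref{lemma:BandiniLonghi} and the cokernel is forced to be a subquotient of the local kernels --- the only quantities that occur are $|E(\widetilde K_\infty)[p^\infty]|$, which dominates $\ker(\beta_n)$, and the orders $|E(\widetilde K_{\infty,w})[p^\infty]|$ for $w\mid v$ with $v\in S_1$, which dominate the $\ker(\gamma_{n,v})$. So the proposition comes down to bounding these two families of orders independently of $\widetilde K_\infty$.

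I would first take $m$ large enough that $\mathcal{P}(\widetilde K_\infty)=\mathcal{P}(K_\infty)$ for every $\widetilde K_\infty\in\calU(K_\infty,m)$: each of the finitely many $v\in\mathcal{P}(K_\infty)$ has inertia subgroup $p^{j(v)}\Z_p$ in $\Gal(K_\infty/K)\cong\Z_p$, hence ramifies in $K_m$ for $m>\max_v j(v)$, and since $K_m\subseteq\widetilde K_\infty$ while $\mathcal{P}(\widetilde K_\infty)\subseteq\mathcal{P}(K_\infty)$ this forces $v\in\mathcal{P}(\widetilde K_\infty)$. In particular the set $S_1$ is exactly the one intrinsically attached to $\widetilde K_\infty$, so Theorem~\ref{thm:control} applies to $\widetilde K_\infty$ with $S=S_1$; moreover each good place $v\in S_1$ already ramifies in $K_m$, so its inertia in $\Gal(\widetilde K_\infty/K)$ is $p^{j'}\Z_p$ with $j'<m$, $v$ is totally ramified in $\widetilde K_\infty/\widetilde K_{j'}$, and the residue field at any $w\mid v$ is an extension of the residue field of $v$ of $p$-power degree less than $p^m$. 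Combining the formal-group exact sequence with \cite[Lemma 2.5.1]{Tan} gives $E(\widetilde K_{\infty,w})[p^\infty]\cong\overline{E}(\Ff_{\widetilde K_{\infty,w}})[p^\infty]$, whose order is bounded in terms of the reductions of $E$ at the finitely many places of $S_1$ and the number $p^m$ alone; for the bad places of $S_1$ one argues as in Theorem~\ref{thm:control} and \cite[Theorem 4.12]{BandiniLonghi}, the bounds there again depending only on the local datum at $v$ and on a residue degree $<p^m$. This handles the local contributions uniformly.

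For the global torsion one uses that, by non-isotriviality, every $p$-power torsion point of $E$ rational over $\widetilde K_\infty$ already lies in the \emph{fixed} field $K(E[p^\infty])$, so $E(\widetilde K_\infty)[p^\infty]=E\bigl(\widetilde K_\infty\cap K(E[p^\infty])\bigr)[p^\infty]$; this group is finite by \cite[Lemma 3.1]{BandiniLonghi}, and --- since $E$ has no additive reduction, the extension $K(E[p^\infty])/K$ is abelian and unramified, so its pro-$p$ part is either finite over $K$ or contained in the arithmetic $\Z_p$-extension $K\Ff^{(p)}$ --- the field $\widetilde K_\infty\cap K(E[p^\infty])$ is contained in a fixed finite extension of $K$ once $m$ is large (comparing the layers of $\widetilde K_\infty$, which are totally ordered, with $K_m$ and with the maximal unramified subextension of $\widetilde K_\infty$). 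Hence $|E(\widetilde K_\infty)[p^\infty]|$ is bounded independently of $\widetilde K_\infty\in\calU(K_\infty,m)$, and assembling this with the local bounds produces parameters $(C_1,C_2,1)$ valid throughout the neighbourhood. I expect the main effort to lie not in any single estimate but in making these comparisons between the varying $\widetilde K_\infty$ and the fixed data ($K_\infty$, the bad places, $K(E[p^\infty])$) rigorous --- in particular in checking that the reduction hypotheses and finite ramification of Theorem~\ref{thm:control} genuinely transfer to every $\widetilde K_\infty$ in the neighbourhood, and in dealing with the degenerate case in which $K_\infty$ itself lies inside $K(E[p^\infty])$. This is the function-field counterpart of the local and global $H^0$-control carried out in \cite[Theorem 4.5]{KleineCanad}, whose argument the proof should parallel.
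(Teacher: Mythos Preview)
Your proposal is correct and follows essentially the same route as the paper: both reduce the Fukuda parameters to uniform bounds on $\ker\beta_n^{\widetilde K_\infty}$, $\coker\beta_n^{\widetilde K_\infty}$, and $\ker\gamma_n^{\widetilde K_\infty}$ over a suitable $\calU(K_\infty,m)$, treat the good-ordinary local terms by showing the residue fields stabilize once $m$ exceeds the ramification thresholds, and feed the remaining finiteness statements into the template of \cite[Theorem~4.5]{KleineCanad}. The paper is in fact terser than your sketch---after the residue-field step it simply records that $E(K_\infty)[p^\infty]$ and the split-multiplicative local torsion are finite and that $\ker\gamma_{n,v}^{\widetilde K_\infty}=0$ off $S_1$, and then invokes \cite[Theorems~4.5 and 5.13]{KleineCanad} wholesale, without the explicit $K(E[p^\infty])$ analysis you outline for the global bound.
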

\begin{proof}
    For each $n>0$, we have the following commutative diagram:
\[\begin{tikzcd}
	0 & {R^{S_1}(E/\widetilde{K}_\infty)^{\Gamma_n}} & {H^1_{\fla}(\widetilde{K}_\infty, E[p^\infty])^{\Gamma_n}} & {\left(\underset{v\mid {S_1}}\bigoplus\ K^1_v(E/\widetilde{K}_\infty)\ \underset{v\nmid {S_1}}\bigoplus J^1_v(E/\widetilde{K}_\infty)\right)^{\Gamma_n}} \\
	0 & {R^{S_1}(E/\widetilde{K}_n)} & {H^1_{\fla}(\widetilde{K}_n, E[p^\infty])} & {\underset{v\mid {S_1}}\bigoplus\ K^1_v(E/\widetilde{K}_n)\ \underset{v\nmid {S_1}}\bigoplus\ J^1_v(E/\widetilde{K}_n)}
	\arrow[from=1-1, to=1-2]
	\arrow[from=1-2, to=1-3]
	\arrow[from=1-3, to=1-4]
	\arrow[from=2-1, to=2-2]
	\arrow["{\alpha_n^{\widetilde{K}_\infty}}", from=2-2, to=1-2]
	\arrow[from=2-2, to=2-3]
	\arrow["{{\beta}_n^{\widetilde{K}_\infty}}", from=2-3, to=1-3]
	\arrow[from=2-3, to=2-4]
	\arrow["{{\gamma}_n^{\widetilde{K}_\infty}=\underset{v}\oplus \ {\gamma}_{n,v}^{\widetilde{K}_\infty}}", from=2-4, to=1-4]
\end{tikzcd}\]
Here $\Gamma_n=\gal(\widetilde{K}_\infty/\widetilde{K}_n)$. By arguments similar to \cite[Theorem 4.5]{KleineCanad}, it suffices to find an uniform bound for the kernel and cokernel of  the restriction map $\alpha_n^{\widetilde{K}_\infty}$ for every $\widetilde{K}_\infty$ in some open neighbourhood $\mathcal{U}(K_\infty,m)$ of $K_\infty$. This is equivalent to showing that the kernel and cokernel of the map $\beta_n^{\widetilde{K}_\infty}$ and the kernel of the map $\gamma_n^{\widetilde{K}_\infty}$ are bounded  for every $\widetilde{K}_\infty\in \mathcal{U}(K_\infty,m)$ of $K_\infty$, by snake lemma.

Let $v\in S_1$, such that $E$ has good ordinary reduction at $v$. Since, $K_\infty$ is ramified at $v$, we can choose  $m_0>0$  large enough such that all primes of $K$ ramifying in $K_\infty$  are totally ramified in $K_\infty/K_{m_0}$, then the sets of primes of
$K$ that ramify in $K_\infty$ and  in $\widetilde{K}_\infty\in \calU( K_\infty,m_0 + 1)$, coincide. Hence, the residue 
fields coincide as well. Now,  choose $w\mid v$ to be a prime of $\widetilde{K}_\infty$. As explained in the proof of Theorem \ref{thm:control}, we know that $\ker{\gamma}_{n,v}^{\widetilde{K}_\infty}$ is bounded by $|E(\Ff_{\widetilde{K}_{\infty,w}})[p^\infty]|$, where $\Ff_{\widetilde{K}_{\infty,w}}$ is the residue field of $\widetilde{K}_{\infty,w}$.
Therefore, for $v\in S$, where $E$ has good reduction at $v$, $\ker ({\gamma}_{n}^{\widetilde{K}_\infty})$ are finite and bounded, for every $\widetilde{K}_\infty\in \calU(K_\infty,m_0+1)$ by $|E(\Ff_{K_{\infty,w}})[p^\infty]|$.

Now, we make the following observations:
\begin{enumerate}
    \item  $E(K_\infty)[p^\infty]$ is finite as $E$ is non-isotrivial \cite[Lemma 4.3]{BandiniLonghi}.
    \item for $v\in S$, such that $E$ has split multiplicative reduction at $v$, $w \mid v$, $E(K_{\infty, w})[p^\infty]$ is finite (see proof of \cite[Theorem 4.12]{BandiniLonghi}).
    \item for $v\nmid S$, $\ker({\gamma}_{n,v}^{\widetilde{K}_\infty})=0$.
\end{enumerate}
Now, by arguments similar to \cite[Theorem 4.5]{KleineCanad} (also see \cite[Theorem 5.13]{KleineCanad}), we obtain a neighbourbood $\calU(K_\infty,r)$, such that $r> m_0$ where $\ker(\beta_n^{\widetilde{K}_\infty})$ and  $\ker({\gamma}_{n}^{\widetilde{K}_\infty})$ are finite and bounded by common constants or bounds, independent of $\widetilde{K}_\infty$, for every $\widetilde{K}_\infty\in \calU(K_\infty,r)$. This completes our proof.
\end{proof}

By arguments similar to Proposition \ref{prop:FUkudaS1}, we prove the following proposition:

\begin{proposition}\label{prop:FUkudaS1}
Let us keep the hypothesis and assumption of Theorem \ref{thm:control}. Then there exist integers $m, D_1, D_2 \in \mathbb{N}$ such that $R^{S_2}(E/\widetilde{K}_\infty)^\vee$ is a Fukuda-$\Lambda$-module with bounded parameters $(D_1, D_2, 1)$ for each $\widetilde{K}_\infty \in \calV(E, K_\infty,m)$.
\end{proposition}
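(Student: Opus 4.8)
The plan is to run the proof of the preceding proposition (the $S_1$ case) essentially verbatim, replacing $S_1$ by $S_2$ throughout, and to absorb the extra primes of $S_2\setminus S_1$ using the refined neighbourhoods $\calV(E,K_\infty,m)$. First I would write, for each $\widetilde K_\infty\in\calV(E,K_\infty,m)$ and each $n>0$, the analogous commutative diagram whose rows are the defining exact sequences of $R^{S_2}(E/\widetilde K_n)$ and $R^{S_2}(E/\widetilde K_\infty)^{\Gamma_n}$, with vertical restriction maps $\alpha_n^{\widetilde K_\infty}$, $\beta_n^{\widetilde K_\infty}$ and $\gamma_n^{\widetilde K_\infty}=\bigoplus_v\gamma_{n,v}^{\widetilde K_\infty}$, where $\Gamma_n=\gal(\widetilde K_\infty/\widetilde K_n)$. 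As in the $S_1$ argument, it then suffices to bound $\ker(\beta_n^{\widetilde K_\infty})$, $\coker(\beta_n^{\widetilde K_\infty})$ and $\ker(\gamma_n^{\widetilde K_\infty})$ uniformly in $n$ and in $\widetilde K_\infty$; the snake lemma together with the mechanism of \cite[Theorem~4.5]{KleineCanad} and the corollary following Theorem~\ref{thm:control} will then give that $R^{S_2}(E/\widetilde K_\infty)^\vee=\varprojlim R^{S_2}(E/\widetilde K_n)^\vee$ is a Fukuda-$\Lambda$-module with bounded parameters $(D_1,D_2,1)$.

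For the primes $v\in S_1$ and for $\beta_n^{\widetilde K_\infty}$ nothing changes: I would reuse, word for word, the bounds from the preceding proposition — the finiteness of $E(K_\infty)[p^\infty]$ from non-isotriviality (\cite[Lemma~4.3]{BandiniLonghi}) bounding $\ker(\beta_n^{\widetilde K_\infty})$ via Lemma~\ref{lemma:BandiniLonghi}, the finiteness of $E(K_{\infty,w})[p^\infty]$ at bad primes, and the formal-group exact sequence together with \cite[Lemma~2.5.1]{Tan} at good ordinary ramified primes — along with the choice of $m_0$ so large that every prime of $K$ ramifying in $K_\infty$ is totally ramified above $K_{m_0}$, which forces the ramified primes (hence the relevant residue fields) to coincide for all $\widetilde K_\infty\in\calU(K_\infty,m_0+1)$. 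The genuinely new input is the contribution of a prime $v\in S_2\setminus S_1$. By hypothesis such $v$ splits completely in $K_\infty$, and by the very definition of $\calV(E,K_\infty,m)$ it splits completely in $\widetilde K_\infty$ as well; hence on each local factor the restriction $H^1_{\fla}(\widetilde K_{\infty,w},E[p^\infty])^{\Gamma_n}\to H^1_{\fla}(\widetilde K_{n,w'},E[p^\infty])$ is an isomorphism and $\ker(\gamma_{n,v}^{\widetilde K_\infty})=0$, exactly as in the third case of the proof of Theorem~\ref{thm:control}. So the extra primes contribute nothing to $\ker(\gamma_n^{\widetilde K_\infty})$.

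Putting these together, I would take $m:=\max(m_0+1,r)$, where $r$ is the radius produced by the $S_1$ argument, and then restrict to $\calV(E,K_\infty,m)$: for every $\widetilde K_\infty$ in this neighbourhood the kernels and cokernels of $\beta_n^{\widetilde K_\infty}$ and the kernel of $\gamma_n^{\widetilde K_\infty}$ are finite and bounded by constants depending only on $|E(K_\infty)[p^\infty]|$, on the bad-prime torsion, and on the residue fields at the ramified primes of $K_\infty$, none of which depends on $\widetilde K_\infty$ or $n$. The snake lemma bounds $\ker(\alpha_n^{\widetilde K_\infty})$ and $\coker(\alpha_n^{\widetilde K_\infty})$ uniformly, and the Fukuda-module criterion (as in \cite[Corollary~4.2, Theorem~4.5]{KleineCanad}) yields the claim.

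I expect the only delicate point to be the simultaneous uniformity: one must fix a single radius $m$ for which, at once and for every $\widetilde K_\infty$ in the neighbourhood, the ramification pattern at the primes of $S_1$ has already stabilised \emph{and} complete splitting at the primes of $S_2\setminus S_1$ persists. The first is handled by enlarging to $\calU(K_\infty,m_0+1)$ as in the number-field setting; the second is precisely the property that the refinement $\calV(E,K_\infty,m)\subset\calU(K_\infty,m)$ was built to guarantee. Once both hold the estimates are literally those of the $S_1$ case — the new primes drop out of every kernel — so no new numerical bound has to be proved.
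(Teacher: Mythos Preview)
Your proposal is correct and is precisely the approach the paper takes: the paper's proof is the single line ``By arguments similar to Proposition~\ref{prop:FUkudaS1}'', and your write-up correctly unpacks this, in particular isolating the one new ingredient --- that primes in $S_2\setminus S_1$ are completely split in every $\widetilde K_\infty\in\calV(E,K_\infty,m)$ and hence contribute zero to $\ker(\gamma_n^{\widetilde K_\infty})$, which is exactly why the refined neighbourhood is used.
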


Now,  mimicking  the ideas of \cite[Theorem 4.11]{KleineCanad} we obtain the following results, using Proposition \ref{prop:FUkudaS1}:
\begin{theorem}\label{mainthm:1}
   Let $K_\infty$ be a $\Z_p$-extension of $K$ that is finitely ramified. Let ${\Sigma_1}$ be a finite set of primes containing $S_1$.  Let $E$ be a non-isotrivial elliptic curve defined over $K$, with good ordinary and split multiplicative reduction at all primes of $S_1$. 

     Suppose, that $R^{S_1}(E/K_\infty)^\vee$ is a finitely generated torsion $\Lambda$-module.
    Then there exists a neighbourhood $U = U(K_{\infty}, m)$ of $K_{\infty}$ such that
\begin{itemize}
    \item $R^{\Sigma_1}(E/\widetilde{K}_{\infty})$ is a torsion $\Lambda$-module for each $\widetilde{K}_{\infty} \in U$;
    \item $\mu(R^{\Sigma_1}(E/\widetilde{K}_{\infty})) \leq \mu(R^{S_1}(E/K_{\infty}))$ for each $\widetilde{K}_{\infty} \in U$;
    \item $\lambda(R^{\Sigma_1}(E/\widetilde{K}_{\infty})) \leq \lambda(R^{S_1}(E/K_{\infty}))$ for each $\widetilde{K}_{\infty} \in U$ such that $\mu(R^{S_1}(E/\widetilde{K}_{\infty})) = \mu(R^{S_1}(E/K_{\infty}))$.
\end{itemize}
\end{theorem}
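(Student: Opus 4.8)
The plan is to combine the Fukuda‑module description of $R^{S_1}(E/\widetilde{K}_\infty)^\vee$ supplied by Proposition \ref{prop:FUkudaS1} with the layer‑by‑layer comparison machinery for Fukuda modules of \cite{Kleine} (as used in \cite[\S 4]{KleineCanad}), and then to pass from $S_1$ to the larger set $\Sigma_1$ by a purely module‑theoretic monotonicity argument. \emph{Step 1} (controlling $R^{S_1}$ over a neighbourhood): by Proposition \ref{prop:FUkudaS1} there are $m_0\in\N$ and $p$‑powers $C_1,C_2$ such that for every $\widetilde{K}_\infty\in\calU(K_\infty,m_0)$ the module $R^{S_1}(E/\widetilde{K}_\infty)^\vee=\varprojlim_n R^{S_1}(E/\widetilde{K}_n)^\vee$ is a Fukuda‑$\Lambda$‑module with parameters bounded by $(C_1,C_2,1)$, where $\Gamma=\Gal(\widetilde{K}_\infty/K)$ and $\Lambda=\Z_p[[\Gamma]]$. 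By hypothesis $R^{S_1}(E/K_\infty)^\vee$ is finitely generated and torsion over $\Lambda$; put $\mu_0:=\mu(R^{S_1}(E/K_\infty))$ and $\lambda_0:=\lambda(R^{S_1}(E/K_\infty))$. The input I borrow from \cite{Kleine} is that, for a Fukuda‑$\Lambda$‑module with fixed parameters $(C_1,C_2,1)$, finite generation and torsionness over $\Lambda$, together with the bounds $\mu\leq\mu_0$ and (conditionally on $\mu=\mu_0$) $\lambda\leq\lambda_0$, already follow once the $N$‑th layer of the module agrees with that of $R^{S_1}(E/K_\infty)^\vee$, for a threshold $N=N(C_1,C_2,\mu_0,\lambda_0)\geq m_0$ depending only on the parameters and on $\mu_0,\lambda_0$. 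For $\widetilde{K}_\infty\in\calU(K_\infty,N)$ one has $[K_\infty\cap\widetilde{K}_\infty:K]\geq p^N$, so the unique degree‑$p^N$ subextension is common, $\widetilde{K}_N=K_N$; since $R^{S_1}(E/{-})$ and the finite set $S_1$ of places of $K$ do not depend on the ambient $\Z_p$‑extension (see \eqref{definition4.1}), the $N$‑th layers of $R^{S_1}(E/\widetilde{K}_\infty)^\vee$ and $R^{S_1}(E/K_\infty)^\vee$ coincide. Hence, with $U:=\calU(K_\infty,N)$, for every $\widetilde{K}_\infty\in U$ the module $R^{S_1}(E/\widetilde{K}_\infty)^\vee$ is finitely generated torsion over $\Lambda$ with $\mu(R^{S_1}(E/\widetilde{K}_\infty))\leq\mu_0$, and $\lambda(R^{S_1}(E/\widetilde{K}_\infty))\leq\lambda_0$ whenever $\mu(R^{S_1}(E/\widetilde{K}_\infty))=\mu_0$.

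\emph{Step 2} (from $S_1$ to $\Sigma_1$): because $S_1\subseteq\Sigma_1$, comparing the kernels in \eqref{definition4.1} gives $R^{\Sigma_1}(E/\widetilde{K}_\infty)\subseteq R^{S_1}(E/\widetilde{K}_\infty)$, since at each $v\in\Sigma_1\setminus S_1$ the condition defining $R^{\Sigma_1}$, namely triviality in $K_v^1$, is stronger than the condition, triviality in the quotient $J_v^1$, imposed by $R^{S_1}$. This inclusion is functorial in the field, hence dualizes to a surjection of $\Lambda$‑modules $R^{S_1}(E/\widetilde{K}_\infty)^\vee\twoheadrightarrow R^{\Sigma_1}(E/\widetilde{K}_\infty)^\vee$ with some kernel $W$. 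As $\Lambda$ is Noetherian and $R^{S_1}(E/\widetilde{K}_\infty)^\vee$ is finitely generated torsion, so is $R^{\Sigma_1}(E/\widetilde{K}_\infty)^\vee$; and by additivity of $\mu$ and $\lambda$ along $0\to W\to R^{S_1}(E/\widetilde{K}_\infty)^\vee\to R^{\Sigma_1}(E/\widetilde{K}_\infty)^\vee\to 0$ one gets $\mu(R^{\Sigma_1}(E/\widetilde{K}_\infty))\leq\mu(R^{S_1}(E/\widetilde{K}_\infty))$ and $\lambda(R^{\Sigma_1}(E/\widetilde{K}_\infty))\leq\lambda(R^{S_1}(E/\widetilde{K}_\infty))$ unconditionally. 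Chaining Steps 1 and 2 over $U$ now yields the three assertions: $R^{\Sigma_1}(E/\widetilde{K}_\infty)$ is torsion; $\mu(R^{\Sigma_1}(E/\widetilde{K}_\infty))\leq\mu(R^{S_1}(E/\widetilde{K}_\infty))\leq\mu_0$; and on the locus $\mu(R^{S_1}(E/\widetilde{K}_\infty))=\mu_0$ one has $\lambda(R^{\Sigma_1}(E/\widetilde{K}_\infty))\leq\lambda(R^{S_1}(E/\widetilde{K}_\infty))\leq\lambda_0$.

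\emph{Main obstacle.} The delicate point is the comparison in Step 1: pinning down $N$ so that it is simultaneously at least $m_0$ (so the uniform Fukuda parameters of Proposition \ref{prop:FUkudaS1} are in force), large enough in terms of $C_1,C_2$ that agreement of the $N$‑th layer forces both finite generation/torsionness and the stated $\mu$‑ and $\lambda$‑bounds, and uniform over all of $U$. This is exactly the bookkeeping carried out in \cite[Theorem 4.11]{KleineCanad} (building on \cite{Kleine}); the genuinely function‑field‑specific ingredients — finiteness of $E(\widetilde{K}_\infty)[p^\infty]$ and of the relevant local $p$‑torsion, non‑isotriviality, and the control theorem with uniformly bounded error terms — have already been isolated in Theorem \ref{thm:control}, its Corollary, and Proposition \ref{prop:FUkudaS1}, so the remaining work is to transcribe Kleine's argument in the present setting. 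A minor point to verify explicitly is that the comparison in Step 1 genuinely takes place at a common finite level, i.e.\ that $R^{S_1}(E/K_N)$ is intrinsic to the field $K_N=\widetilde{K}_N$ and to the fixed finite set $S_1$ of places of $K$; this is immediate from \eqref{definition4.1}, since enlarging the ambient $\Z_p$‑extension changes neither $K_N$ nor $S_1$.
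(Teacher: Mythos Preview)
Your proposal is correct and follows essentially the same two-step approach as the paper: first apply Proposition~\ref{prop:FUkudaS1} and the Fukuda-module comparison machinery of \cite[Theorem~4.11]{KleineCanad} to control $R^{S_1}(E/\widetilde{K}_\infty)^\vee$ uniformly over a neighbourhood, then use the inclusion $R^{\Sigma_1}(E/\widetilde{K}_\infty)\subseteq R^{S_1}(E/\widetilde{K}_\infty)$ to descend the invariants to $\Sigma_1$. You have simply unpacked in detail what the paper states tersely by reference to \cite{KleineCanad}.
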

\begin{proof}
    The proof follows for $R^{S_1}(E/K_\infty)$  by argument similar to \cite[Theorem 4.11]{KleineCanad}, by application of Proposition \ref{prop:FUkudaS1}. Then we use the fact that  $R^{\Sigma_1}(E/K_\infty)\subset R^{S_1}(E/K_\infty)$.
\end{proof}
By similar arguments as in Theorem \ref{mainthm:1}, we get
\begin{theorem}\label{mainthm:2}
   Let $K_\infty$ be a $\Z_p$-extension of $K$ that is finitely ramified. Let $E$ be a non-isotrivial elliptic curve defined over $K$, with good ordinary and split multiplicative reduction at all primes of $S_2$.
   Let ${\Sigma_2}$ be a finite set of primes containing $S_2$.

     Suppose, that $R^{S_2}(E/K_\infty)^\vee$ is a finitely generated torsion $\Lambda$-module.
    Then there exists a neighbourhood $U = \calV(E, K_\infty,m)$ of $K_{\infty}$ such that
\begin{itemize}
    \item $R^{\Sigma_2}(E/\widetilde{K}_{\infty})$ is a torsion $\Lambda$-module for each $\widetilde{K}_{\infty} \in U$;
    \item $\mu(R^{\Sigma_2}(E/\widetilde{K}_{\infty})) \leq \mu(R^{S_2}(E/K_{\infty}))$ for each $\widetilde{K}_{\infty} \in U$;
    \item $\lambda(R^{\Sigma_2}(E/\widetilde{K}_{\infty})) \leq \lambda(R^{S_2}(E/K_{\infty}))$ for each $\widetilde{K}_{\infty} \in U$ such that $\mu(R^{S_2}(E/\widetilde{K}_{\infty})) = \mu(R^{S_2}(E/K_{\infty}))$.
\end{itemize}
\end{theorem}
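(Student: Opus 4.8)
The plan is to repeat the structure of the proof of Theorem \ref{mainthm:1} verbatim, but over the refined neighbourhoods $\calV(E,K_\infty,m)$ rather than $\calU(K_\infty,m)$ and using Proposition \ref{prop:FUkudaS1} in its $S_2$-form, and then to descend from $S_2$ to $\Sigma_2$ exactly as in the last line of the proof of Theorem \ref{mainthm:1}. First I would extract from Proposition \ref{prop:FUkudaS1} integers $m, D_1, D_2$ so that for every $\widetilde K_\infty \in \calV(E,K_\infty,m)$ the module $R^{S_2}(E/\widetilde K_\infty)^\vee = \varprojlim R^{S_2}(E/\widetilde K_n)^\vee$ is a Fukuda-$\Lambda$-module with parameters $(D_1,D_2,1)$, the bounds being independent of $\widetilde K_\infty$. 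The point of passing to $\calV$ rather than $\calU$ is precisely that the extra primes $v\in S_2\setminus S_1$ are required to split completely in $\widetilde K_\infty$, which forces $\ker(\gamma_{n,v}^{\widetilde K_\infty})=0$ at those places, so they do not spoil the uniform bounds on the kernel and cokernel of the control diagram; for the remaining $v\in S_1$ the bounds come, as in Theorem \ref{thm:control} and Proposition \ref{prop:FUkudaS1}, from the finiteness of $E(K_{\infty,w})[p^\infty]$ and the stabilisation of ramification and residue fields on a small enough neighbourhood.

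Next I would invoke Kleine's comparison argument, i.e. the argument of \cite[Theorem 4.11]{KleineCanad}, applied to this uniform family of Fukuda-$\Lambda$-modules. Since $R^{S_2}(E/K_\infty)^\vee$ is assumed finitely generated torsion over $\Lambda \cong \Z_p[[T]]$, it has finite $\mu$- and $\lambda$-invariants, and the Fukuda estimates with bounded parameters (a bound on $|\coker(\pr_n)|$ together with $Z_{n+1}\subset \mathfrak{m} Z_n$) control these invariants of the limit in terms of the corresponding quantities of any sufficiently high layer, which are shared up to bounded error across a neighbourhood. This yields a possibly smaller neighbourhood $U = \calV(E,K_\infty,m')$ with $m'\ge m$ (so $U\subseteq\calV(E,K_\infty,m)$, keeping us inside the region where the bounded-parameter property holds) such that, for each $\widetilde K_\infty\in U$: (i) $R^{S_2}(E/\widetilde K_\infty)^\vee$ is finitely generated torsion over $\Lambda$; (ii) $\mu(R^{S_2}(E/\widetilde K_\infty))\le\mu(R^{S_2}(E/K_\infty))$; and (iii) $\lambda(R^{S_2}(E/\widetilde K_\infty))\le\lambda(R^{S_2}(E/K_\infty))$ whenever equality holds in (ii).

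Finally I would pass to $\Sigma_2$. As $\Sigma_2\supseteq S_2$, imposing the stronger local condition (mapping to $K_v^1$ rather than to its quotient $J_v^1$) at the additional places only shrinks the fine Selmer group, so $R^{\Sigma_2}(E/\widetilde K_\infty)\subseteq R^{S_2}(E/\widetilde K_\infty)$ as discrete $\Lambda$-modules; dually, $R^{S_2}(E/\widetilde K_\infty)^\vee$ surjects onto $R^{\Sigma_2}(E/\widetilde K_\infty)^\vee$. A quotient of a finitely generated torsion $\Lambda$-module is again finitely generated torsion, and, $\mu$ and $\lambda$ being additive along short exact sequences of such modules, one gets $\mu(R^{\Sigma_2}(E/\widetilde K_\infty))\le\mu(R^{S_2}(E/\widetilde K_\infty))$ and $\lambda(R^{\Sigma_2}(E/\widetilde K_\infty))\le\lambda(R^{S_2}(E/\widetilde K_\infty))$. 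Chaining these with (i)--(iii) gives all three bulleted assertions on $U$.

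The genuinely delicate step is the uniformity already encapsulated in Proposition \ref{prop:FUkudaS1}: the choice of $m$ must simultaneously guarantee that the set of primes of $K$ ramifying in $\widetilde K_\infty$ — hence the residue fields and the bounds $|E(\Ff_{\widetilde K_{\infty,w}})[p^\infty]|$ — is independent of $\widetilde K_\infty\in\calV(E,K_\infty,m)$, and that the primes of $S_2\setminus S_1$ keep splitting completely so their local kernels vanish. Granting Proposition \ref{prop:FUkudaS1}, everything else is the formal Fukuda-module bookkeeping together with the trivial submodule/quotient estimate for $\Sigma_2$, and no new difficulty arises beyond what was already handled for $S_1$ in Theorem \ref{mainthm:1}.
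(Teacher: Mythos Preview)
Your proposal is correct and follows exactly the paper's own approach: apply the $S_2$-version of Proposition \ref{prop:FUkudaS1} over the refined neighbourhoods $\calV(E,K_\infty,m)$, invoke Kleine's \cite[Theorem 4.11]{KleineCanad} argument to obtain the torsion property and the $\mu$- and $\lambda$-inequalities for $R^{S_2}$, and then use the inclusion $R^{\Sigma_2}(E/\widetilde K_\infty)\subset R^{S_2}(E/\widetilde K_\infty)$ to pass to $\Sigma_2$. Indeed, the paper's proof is just the one-line remark ``By similar arguments as in Theorem \ref{mainthm:1}'', and you have correctly unwound what those arguments are.
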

\begin{remark}
  If an analogue of  \nameref{conja} holds for $K_\infty$, meaning that $R^S(E/K_\infty)^\vee$ is a finitely generated $\mathbb{Z}_p$-module, then it follows that an analogue of  \nameref{conja} also holds for $R^\Sigma(E/\widetilde{K}_\infty)$ for every $\widetilde{K}_\infty \in \mathcal{U}(K_\infty, m)$ and for every $\Sigma$ containing $S_1$. A similar remark holds for $S_2$.

\end{remark}

\section{An example}\label{example}
Let $p=2$ and $K=\Ff_2(t)$. Let $K_\infty$ be a $\Z_p$-extension constructed using Carlitz module that is ramified only at the prime $\mathfrak{P}=(t)$. Let $E/K$ be an elliptic curve given by the Weierstrass equation:
	
		\begin{equation}
			y^2 + xy = x^3 + (1/t) x^2 + 1,
		\end{equation}	

	 Then $E$ has bad reduction only at the prime $(t)$ of $K$ and is an ordinary elliptic curve. 
Let $S_1$ be the singleton set containing the prime $(t)$ and   $\Sigma$ be a finite set containing $S_1$ . Then, by \cite[Corollary 3.15]{GhoshJhaShekhar}, it follows that $R^{S_1}(E/K_\infty)^\vee$ is a finitely generated $\mathbb{Z}_p$-module. Using Theorem \ref{mainthm:1} (and similarly Theorem \ref{mainthm:2}), we can conclude that there exists a neighbourhood $U = \calU(K_\infty, m)$ (resp. $\calV(K_\infty, m)$) of $K_\infty$ such that for every $\widetilde{K}_\infty \in U$, the module $R^\Sigma(E/\widetilde{K}_\infty)^\vee$ is finitely generated over $\mathbb{Z}_p$. Consequently,  \nameref{conja} of Coates and Sujatha holds for $R^\Sigma(E/\widetilde{K}_\infty)^\vee$ for all $\widetilde{K}_\infty \in U$.
\section*{Acknowledgement}
The  author gratefully acknowledges the support received from the NBHM postdoctoral fellowship.

\bibliographystyle{alpha}
\bibliography{bib}
\end{document}